\newtheorem{thm}{Theorem}[section]
\newtheorem{lem}[thm]{Lemma}
\newtheorem{prop}[thm]{Proposition}
\theoremstyle{remark} 
\newtheorem{rem}[thm]{Remark}
 \crefname{rem}{Remark}{Remarks}
 \Crefname{rem}{Remark}{Remarks}
\newtheorem{example}[thm]{Example}
\newtheorem{question}[thm]{Question}
\theoremstyle{definition}
\titleformat*{\section}{\normalsize \bfseries \filcenter}
\titleformat*{\subsection}{\normalsize \bfseries }
\newtheorem{mainthm}{Theorem}
\Crefname{mainthm}{Theorem}{Theorems}
\newtheorem{maincor}[mainthm]{Corollary}
\Crefname{maincor}{Corollary}{Corollaries}
\Crefname{mainquestion}{Question}{Questions}
\Crefname{mainconj}{Conjecture}{Conjectures}
\def\namedlabel#1#2{\begingroup
   \def\@currentlabel{#2}
   \label{#1}\endgroup
}
\DeclareFontFamily{U}{mathx}{}
\DeclareFontShape{U}{mathx}{m}{n}{<-> mathx10}{}
\DeclareSymbolFont{mathx}{U}{mathx}{m}{n}
\DeclareMathAccent{\widehat}{0}{mathx}{"70}
\DeclareMathAccent{\widecheck}{0}{mathx}{"71}
\newcommand{\wt}{\widetilde}
\newcommand{\RR}{\mathbb R}
\newcommand{\ZZ}{\mathbb Z}
\DeclareMathOperator{\img}{im}
\DeclareMathOperator{\relint}{relint}
\title{\normalsize \textbf{Strata of toric hyperplane arrangements, zonotope lattice points, and the Bondal-Thomsen collection}}
\author{\normalsize Friedrich Bauermeister, Andrew Hanlon, Davis Painter, Sair Shaikh, Benjamin Singer}
\date{}
\newcommand{\Addresses}{{
  \bigskip
  \footnotesize

  \noindent F.~Bauermeister, \textsc{Department of Mathematics, Dartmouth College}\par\nopagebreak
  \noindent \textit{E-mail address}: \hyperlink{mailto:friedrich.bauermeister.gr@dartmouth.edu}{\texttt{friedrich.bauermeister.gr@dartmouth.edu}}

  \medskip

  \noindent A.~Hanlon, \textsc{Department of Mathematics, University of Oregon}\par\nopagebreak
  \noindent \textit{E-mail address}: \hyperlink{mailto:ahanlon@uoregon.edu}{\texttt{ahanlon@uoregon.edu}}

  \medskip

  \noindent D.~Painter, \textsc{Dartmouth College}\par\nopagebreak
  \noindent \textit{E-mail address}: \hyperlink{mailto:davis.g.painter.26@dartmouth.edu}{\texttt{davis.g.painter.26@dartmouth.edu}}

  \medskip

  \noindent S.~Saikh, \textsc{Dartmouth College}\par\nopagebreak
  \noindent \textit{E-mail address}: \hyperlink{mailto:sair.shaikh.26@dartmouth.edu}{\texttt{sair.shaikh.26@dartmouth.edu}}

  \medskip

  \noindent B.~Singer, \textsc{Dartmouth College}\par\nopagebreak
  \noindent \textit{E-mail address}: \hyperlink{mailto:benjamin.d.singer.27@dartmouth.edu}{\texttt{benjamin.d.singer.27@dartmouth.edu}}

  \medskip

}}
\begin{document}

\maketitle

\begin{abstract}
    We show that strata of oriented toric hyperplane arrangements are in bijection with a collection of lattice points in a zonotope. 
    Moreover, we relate the dimension of the stratum and the dimension of the minimal face of the zonotope containing the corresponding lattice point.
    We discuss how this correspondence is related to toric varieties and the Bondal-Thomsen generators of their derived categories.
\end{abstract}

\section{Introduction}

This paper concerns two objects of discrete geometry: a zonotope and a toric hyperplane arrangement.
Zonotopes are a special class of convex polytopes obtained as Minkowski sums of line segments or, equivalently, by projecting a cube under a linear map.
Zonotopes are a classically studied object of convex geometry (see, e.g., \cite{grunbaum1967convex,ziegler2012lectures}) with foundational work on their structure found in \cite{mcmullen1971zonotopes,shephard1974combinatorial,billera1992fiber} among many other places.
On the other hand, toric hyperplane arrangements have received considerably less attention. 
In this paper, a toric hyperplane arrangement consists of a collection of codimension one linear subtori in a real torus. 
Some combinatorial properties of toric hyperplane arrangements have been studied in \cite{ehrenborg2009affine, lawrence2011enumeration, chandrasekhar2017face, bergerova2023symmetry, hanlon2024rational}.

Zonotopes are closely related to central hyperplane arrangements, but we are interested in a more recent interaction with toric hyperplane arrangements that has appeared in the study of dervied categories of toric varieties and homological mirror symmetry. 
In an influential note \cite{bondal2006derived}, Bondal proposed that there are natural line bundle generators of the derived category of a smooth toric variety corresponding to strata of a toric hyperplane arrangement. 
Earlier, these same line bundles were realized as the summands of the pushforward of the structure sheaf under toric Frobenius and indexed by certain lattice points of a zonotope \cite{thomsen2000frobenius} (see also \cite{bogvad1998splitting,achinger2015characterization}).  
Bondal's proposal was recently realized in \cite{favero2023rouquier,hanlon2024resolutions,ballard2024king} where the generating set of line bundles is called the Thomsen or Bondal-Thomsen collection.
Homological mirror symmetry for toric varieties has been heavily influenced by and has built upon Bondal's proposal, see, e.g., \cite{fang2011categorification,fang2012t,hanlon2022aspects}.
Our main result explores this correspondence between lattice points of a zonotope and the strata of a toric hyperplane arrangement from a purely combinatorial viewpoint and, as a consequence, provides new insight into these generators. 

We will set up and state our main combinatorial result in \cref{subsec:combintro} and discuss how it is related to the Bondal-Thomsen collection in \cref{subsec:toricintro}.

\subsection{Combinatorial setup and result} \label{subsec:combintro}
Before stating our main result, we make the set up precise.
Let $A = \{v_1, \hdots v_k \}$ be a finite ordered subset of $\ZZ^n$. 
We first consider the map $
\varphi \colon \RR^n \to \RR^k $
given by setting the $j$th component of $\varphi(u)$ to be $\langle u, v_j \rangle$. 
Here, $\langle \cdot, \cdot \rangle$ is the standard inner product or dot product on $\RR^n$.
More abstractly, we could consider $v_1, \hdots, v_k$ to lie in a lattice $N \simeq \ZZ^n$ and the domain of $\varphi$ to be $M \otimes_{\ZZ} \RR$ where $M$ is the dual lattice. 
We will largely favor the more concrete approach outside of later discussions around toric geometry.

We set the cokernel of $\varphi$ to be $\pi \colon \RR^k \to \RR^k/\mathrm{im}(\varphi)$ from which we can define the \emph{half-open zonotope}
\begin{equation} \label{eq:zono}
    Z = \pi([0,1)^k)
\end{equation}
whose closure is the zonotope $\bar{Z} = \pi([0,1]^k)$. 
Note that $\bar{Z}$ is a lattice polytope with respect to the lattice $\pi(\ZZ^k)$. 
In addition, observe that $Z$ is convex so the intersection of any two faces of $Z$ is a face of $Z$. 
This implies that for every point $p \in Z$ there is a face of smallest dimension containing $p$ which we will call the minimal face of $Z$ containing $p$.

For our other construction, we consider the quotient $\RR^n/\varphi^{-1}(\ZZ^k)$ and the maps
\[ h_j \colon \RR^n/\varphi^{-1}(\ZZ^k) \to \RR/ \ZZ \]
induced by $\langle \cdot , v_j \rangle $. We then obtain an oriented hyperplane arrangement given by the $h_j^{-1}(0)$.
We consider the following induced stratification of $\RR^n/\varphi^{-1}(\ZZ^k)$. Let $\Phi \colon \RR^n \to \ZZ^k$ be the map such that the $j$th coordinate of $\Phi(u)$ is given by $\lceil \langle u, v_j \rangle \rceil$ where $\lceil \cdot \rceil$ 
is the ceiling function. 
For every $c \in \ZZ^k$, we obtain a region $\wt{S}_c \coloneq \Phi^{-1}(c)$ of $\RR^n$, and we obtain a stratification of $\RR^n/\varphi^{-1}(\ZZ^k)$ by the images $S_c$ of these sets under the quotient map.
We call these the $\Phi$-strata of $\RR^n/\varphi^{-1}(\ZZ^k)$.

 Note that the boundaries of the $S_c$ lie on the hyperplanes $h_j^{-1}(0)$ and these boundary pieces may or may not be part of a $\Phi$-stratum depending on the orientation.\footnote{In particular, the $\Phi$-strata may be larger than the typical equidimensional strata one associates to an unoriented hyperplane arrangement such as in \cite{ehrenborg2009affine,hanlon2024rational}.} 
 In fact, it will be important to record the set of hyperplanes entirely containing a stratum.
Thus, for a $\Phi$-stratum $S$, we define 
\begin{equation} \label{eq:js}
    J_S \coloneq \left\{ j \in \{1, \cdots, k\} : S \subseteq h_j^{-1}(0) \right\}.
\end{equation} 
The main result of this paper relates these objects as follows.

\begin{mainthm} \label{thm:main}
    There is a bijection between the $\Phi$-strata of $\RR^n/\varphi^{-1}(\ZZ^k)$ and the lattice points of $Z$. 
    Moreover, the assignment of a minimal face $F_p$ of $Z$ containing the point $p \in Z$ corresponding to a stratum $S$ is entirely determined by $J_S$, is inclusion-reversing with respect to $J_S$, and 
    \begin{equation} \label{eq:dimension} \dim F_p + \dim S = k - | J_S | + \dim\left( \ker \varphi \cap \mathrm{span}(\wt{S}-u)\right) 
    \end{equation}
    for any lift $\wt{S}$ of $S$ to $\RR^n$ and any $u \in \wt{S}$.
\end{mainthm}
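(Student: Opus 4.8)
The proof naturally splits into the bijection and the facial/dimension statements.

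For the bijection I would use the map $u\mapsto\pi(\Phi(u))=\pi(\lceil\varphi(u)\rceil)$. Since $\varphi(u)-\Phi(u)\in(-1,0]^k$ and $\pi\circ\varphi=0$, the point $-\pi(\Phi(u))=\pi(\varphi(u)-\Phi(u))$ lies in $\pi((-1,0]^k)=-Z$, so $\pi(\Phi(u))$ is a lattice point of $Z$; and since $\lceil a+m\rceil=\lceil a\rceil+m$ for $m\in\ZZ$ we have $\Phi(u+w)=\Phi(u)+\varphi(w)$ for $w\in\varphi^{-1}(\ZZ^k)$, so the map descends to $\beta\colon S_c\mapsto\pi(c)$. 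Injectivity of $\beta$ holds because $\pi(c)=\pi(c')$ forces $c-c'\in\mathrm{im}(\varphi)\cap\ZZ^k=\varphi(\varphi^{-1}(\ZZ^k))$, so $\wt S_{c'}$ is a translate of $\wt S_c$ by an element of $\varphi^{-1}(\ZZ^k)$; surjectivity holds since any lattice point $p=\pi(m)=\pi(x)$ with $m\in\ZZ^k$, $x\in[0,1)^k$, is $\beta(S_m)$, as one sees by solving $\varphi(u_0)=m-x$ and computing $\Phi(u_0)=m$ from $-x\in(-1,0]^k$. Thus the stratum $S=S_c$ corresponds to $p=\pi(c)$.

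For $F_p$ the plan is to compute its normal cone in $W^\ast=\mathrm{im}(\varphi)^\perp\subseteq(\RR^k)^\ast$ and use $\dim F_p=\dim\bar Z-\dim N_{\bar Z}(F_p)$. An $\ell\in W^\ast$ corresponds to $\lambda:=\ell\circ\pi$, which is a linear relation $\sum_j\lambda_j v_j=0$, and for any lift $x\in[0,1]^k$ of $p$ (all giving the same $\langle\lambda,x\rangle$) we have $\ell\in N_{\bar Z}(F_p)$ iff $\langle\lambda,x\rangle=\max_{[0,1]^k}\langle\lambda,\cdot\rangle=\sum_{j:\lambda_j>0}\lambda_j$. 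I would take the lift $x^\ast=c-\varphi(u_0)$ with $u_0$ in the relative interior of $\wt S_c$: then $x^\ast\in[0,1)^k$, and $x^\ast_j=0$ exactly for $j\in J_S$ (for $j\notin J_S$ a relative-interior point keeps $\langle u_0,v_j\rangle<c_j$). Because every $x^\ast_j<1$, a short estimate gives $\ell\in N_{\bar Z}(F_p)$ iff $\lambda_j=0$ for $j\notin J_S$ and $\lambda_j\le 0$ for $j\in J_S$. In particular $F_p$ depends only on $J_S$, and $J_S\subseteq J_{S'}$ implies $N_{\bar Z}(F_p)\subseteq N_{\bar Z}(F_{p'})$, i.e. $F_p\supseteq F_{p'}$.

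The dimension count is the substantive part. The description of the normal cone identifies it with $\{\lambda\in\RR^{J_S}:\sum_{j\in J_S}\lambda_j v_j=0,\ \lambda\le 0\}$, whose dimension is $|J_S|-\mathrm{rank}\{v_j:j\in J_S\}$ once this cone is full-dimensional inside the space of linear relations among $\{v_j\}_{j\in J_S}$, equivalently once $0\in\mathrm{relint}\,\mathrm{conv}\{v_j:j\in J_S\}$. I expect this positivity statement to be the main obstacle; I would prove it by contradiction via Stiemke's lemma. If there is no strictly positive relation, there is $w$ with $\langle w,v_j\rangle\ge 0$ for all $j\in J_S$ and $\langle w,v_{j_0}\rangle>0$ for some $j_0\in J_S$; then, starting from $u_0\in\mathrm{relint}(\wt S_c)$, moving a small step in the direction $-w$ stays inside $\wt S_c$ (the $J_S$-constraints are $\langle\cdot,v_j\rangle\le c_j$ and all other defining constraints hold strictly at $u_0$) yet strictly lowers $\langle\cdot,v_{j_0}\rangle$ below $c_{j_0}$, contradicting $j_0\in J_S$. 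The same perturbation idea shows $\mathrm{aff}(\wt S_c)=\{u:\langle u,v_j\rangle=c_j\ \forall j\in J_S\}$, whence $\dim\wt S=n-\mathrm{rank}\{v_j:j\in J_S\}$. Finally, $\dim F_p=\dim\bar Z-\dim N_{\bar Z}(F_p)=(k-\mathrm{rank}(\varphi))-(|J_S|-\mathrm{rank}\{v_j:j\in J_S\})$ and $\dim\ker\varphi=n-\mathrm{rank}(\varphi)$; since $\wt S_c$ is $\ker\varphi$-invariant we have $\ker\varphi\subseteq\wt S-u$, so $\dim(\ker\varphi\cap\mathrm{span}(\wt S-u))=\dim\ker\varphi$. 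Combining these with $\dim\wt S=n-\mathrm{rank}\{v_j:j\in J_S\}$ gives $\dim F_p+\dim\wt S=k-|J_S|+\dim\ker\varphi$, which is \eqref{eq:dimension}.
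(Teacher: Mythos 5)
The bijection part of your proof follows the paper's argument essentially verbatim. The facial and dimension statements, however, are handled by a genuinely different method, and I believe your argument is correct. The paper works on the primal side: it shows, using Rockafellar's fact that $\relint(f(C))=f(\relint(C))$, that $\pi^{-1}(F_p)\cap[0,1)^k$ equals the face $F_S=\{x\in[0,1)^k: x_j=0,\ j\in J_S\}$, so $F_p=\pi(F_S)$, and then applies rank-nullity twice (to $\pi|_{\vspan F_S}$ and to $\varphi|_{\vspan(\wt S-u)}$) to obtain \eqref{eq:dimension}. You instead dualize: you identify the normal cone of $F_p$ inside $\img(\varphi)^\perp$ with $\{\lambda:\lambda_j=0\ (j\notin J_S),\ \lambda_j\le 0\ (j\in J_S),\ \sum\lambda_j v_j=0\}$ using the well-chosen lift $x^\ast=c-\varphi(u_0)$ at a relative-interior point $u_0$, and then read off $\dim F_p$ from $\dim\bar Z-\dim N_{\bar Z}(F_p)$. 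The payoff of your approach is that the ``minimal face determined by $J_S$'' and ``inclusion-reversing'' claims come out essentially for free from the normal cone description; the cost is that you need to know the normal cone is full-dimensional in the space of linear relations among $\{v_j\}_{j\in J_S}$, which you supply via a Stiemke-type perturbation argument that the paper never needs. Your perturbation idea (move from $u_0\in\relint(\wt S_c)$ in the direction $-w$ to contradict $j_0\in J_S$) is sound, and the same idea correctly yields $\mathrm{aff}(\wt S_c)=\{u:\langle u,v_j\rangle=c_j,\ j\in J_S\}$ and hence $\dim\wt S=n-\mathrm{rank}\{v_j:j\in J_S\}$; combined with $\ker\varphi\subseteq\vspan(\wt S-u)$ this recovers \eqref{eq:dimension}. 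One small point worth making explicit if you were to write this up: you compute normal cones of faces of the closed zonotope $\bar Z$, whereas the statement is about faces of the half-open $Z$; this is harmless because the minimal face of $Z$ containing $p$ is $F\cap Z$ for the minimal face $F$ of $\bar Z$ containing $p$, and the two have the same span and dimension, but the identification deserves a sentence.
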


\begin{rem} \cref{thm:main} simplifies in most cases of interest to us. 
Namely, if $A$ contains a basis of $\RR^n$, then $\ker \varphi = 0$ eliminating a term from \cref{eq:dimension}. 
Further, if $A$ contains a basis of $\ZZ^n$, i.e., a list of $n$ vectors whose determinant is $\pm 1$, then $\varphi^{-1}(\ZZ^k) = \ZZ^n$ so our $\Phi$-strata are subsets of the torus $\RR^n/\ZZ^n$.  
\end{rem}

\begin{rem}
    We note that every lattice zonotope can be obtained from the construction above in the following sense.
    Suppose that  $\bar{Z} \subseteq \RR^m$ is the Minkowski sum of intervals $[0, w_j]$ with $w_1, \hdots, w_k \in \ZZ^m$.
    Then, $\bar{Z} = \pi([0,1]^k)$ where $\pi(e_j) = w_j$. 
    Since $\pi$ is represented by a matrix with integral entries, we can choose a basis $u_1, \hdots, u_n \in \ZZ^k$ of $\ker(\pi)$. 
    Then, we can define $\varphi$ by setting the $v_j$ to be the row vectors of the matrix whose column vectors are $u_1, \hdots, u_n$.
\end{rem}

\begin{rem}
    There is a simple formula due to Stanley \cite{stanley1990zonotope} to calculate the lattice points in the zonotope $\bar{Z}$. 
    There are also explicit formulae for the number of strata in a toric hyperplane stratification in \cite{ehrenborg2009affine,lawrence2011enumeration}.
    It would be interesting to relate these results to \cref{thm:main} and thus possibly to each other.
    In general, some modification is needed as \cref{thm:main} concerns only lattice points in $Z$ and $\Phi$-strata can contain multiple regions in the sense of \cite{ehrenborg2009affine,lawrence2011enumeration}.
    However, such a relation should be more straightforward between interior lattice points of $\bar{Z}$ and top-dimensional strata/regions.
\end{rem}

The proof of \cref{thm:main} is contained in \cref{sec:proof}. 
The main technical tool is \cref{lem:relint}, which is a classical result in convex analysis. 
In \cref{sec:faces}, we refine \cref{thm:main} by showing that faces of $Z$ are half-open zonotopes that come from restricting our construction to the linear space spanned by vectors in a lift of $S$.
Before briefly discussing how \cref{thm:main} relates to the Bondal-Thomsen collection in \cref{subsec:toricintro}, we give two examples of \cref{thm:main}.  

\begin{example} \label{ex:hirz2}
Set $A = \{e_1, e_2, -e_1 + 2e_2, -e_2 \}$ where $e_1, e_2$ are the standard basis vectors of $\mathbb{R}^2$. In this example, there are three strata of dimension two, one stratum of dimension one, and one stratum of dimension zero corresponding to lattice points in the zonotope lying on faces of the same dimension. See Figure 1. In this example, $\varphi$ is given by $\varphi(x,y) = (x,y,-x+2y,-y)$ and $\pi$ is given by $\pi(w_1, w_2, w_3, w_4) = (w_3 + w_1 - 2w_2, w_4 + w_2)$. In toric geometry, $A$ is the list of primitive generators for a fan of the Hirzebruch surface of type two (and also other toric varieties whose fans are generated by the same rays with different two-dimensional cones).
\end{example}

\begin{figure}

\begin{center}
    \begin{tikzpicture}[decoration=border]

    \begin{scope}[scale = 1.25]
    
    \draw [thick, postaction={decorate}]
        (-4,1.5) -- (-1,1.5);
    \draw [thick, postaction={decorate}]
        (-2.5,0) -- (-2.5,3);
    \draw[black, ultra thick, ->](-2.5,1.5) -- (-1.8,1.5);
    \draw[black, ultra thick, ->](-2.5,1.5) -- (-2.5,2.2);
    \draw[black, ultra thick, ->](-2.5,1.5) -- (-2.5,0.8);
    \draw[black, ultra thick, ->](-2.5,1.5) -- (-3.2,2.9);
    \draw [thick, postaction={draw,decorate, black, decoration={border, amplitude=0.09cm, angle=90, segment length = 0.25cm}}]
        (0,0) -- (3,1.5);
    \draw [thick, postaction={draw,decorate, black, decoration={border, amplitude=0.09cm, angle=90, segment length = .25cm}}]
        (0,1.5) -- (3,3);
    \draw [thick, postaction={draw,decorate, black, decoration={border, amplitude=0.09cm, angle=90, segment length = .25cm}}]
        (0,3) -- (0,0);
    \draw [thick, postaction={draw,decorate, black, decoration={border, amplitude=0.09cm, angle=90, segment length = .25cm}}]
        (3,3) -- (3,0);
    \draw [thick, postaction={draw,decorate, black, decoration={border, amplitude=0.09cm, angle=90, segment length = .25cm}}]
        (0,0) -- (3,0) -- (0,0);
    \draw [thick, postaction={draw,decorate, black, decoration={border, amplitude=0.09cm, angle=90, segment length = .25cm}}]
        (0,3) -- (3,3) -- (0,3);

    \draw [thick, postaction={decorate}]
        (7,0) -- (7,3);
    \draw [thick, postaction={decorate}]
        (4.5,0.5) -- (9.5,0.5);
    \draw [thick, gray]
        (7,0.5) -- (5,1.5) -- (5,2.5) -- (7,2.5) -- (9,1.5) -- (9,0.5) -- (7,0.5);
    \filldraw[black] (7,0.5) circle (3pt);
    \filldraw[blue!80!black] (7,1.5) circle (3pt);
    \filldraw[red] (8,0.5) circle (3pt);
    \filldraw[gray] (5,1.5) circle (2pt);
    \filldraw[blue] (6,1.5) circle (3pt);
    \filldraw[gray] (9,0.5) circle (2pt);
    \filldraw[blue!60!black] (8,1.5) circle (3pt);
    \filldraw[gray] (5,2.5) circle (2pt);
    \filldraw[gray] (6,2.5) circle (2pt);
    \filldraw[gray] (9,1.5) circle (2pt);
    \filldraw[gray] (7,2.5) circle (2pt);
    
    \filldraw[black] (0,0) circle (3pt);
    \filldraw[red] (1.5,0) circle (3pt);
    \filldraw[blue] (1.5, 0.375) circle (3pt);
    \filldraw[blue!80!black] (1.5, 1.5) circle (3pt);
    \filldraw[blue!60!black] (1.5, 2.625) circle (3pt);

    \end{scope}
    
\end{tikzpicture}
\end{center}
\caption{$A$ from \cref{ex:hirz2} depicted on the left. The corresponding toric hyperplane arrangement is depicted in the middle with ``hairs" to designate the orientations. The zonotope is depicted on the right with each of the five lattice points of $Z$ colored the same as the corresponding stratum.} 
\label{fig:p1}

\end{figure}
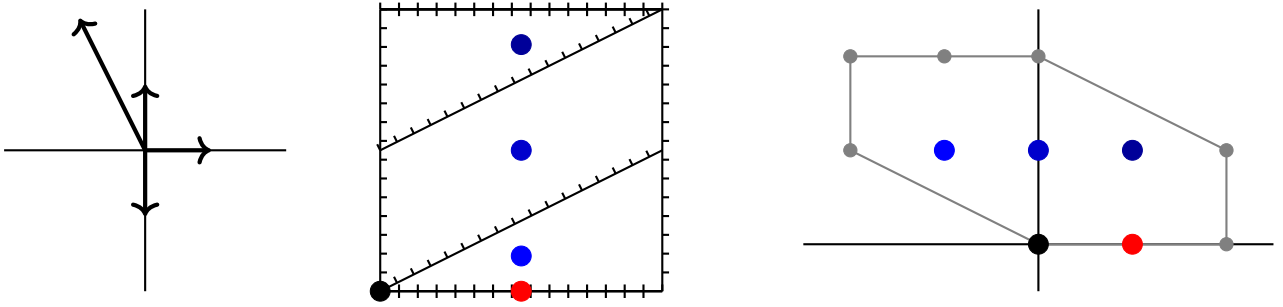

\begin{example} \label{ex:blhirz2}

Set $A = \{e_1, e_2, -e_1 + 2e_2, -e_1 - e_2,-e_2 \}$ where $e_1,e_2$ are the standard basis vectors of $\mathbb{R}^2$. In this example, there are six strata of dimension two, one stratum of dimension one, and one stratum of dimension zero. In $Z$, there are six lattice points on the dimension three face, one lattice point on a two-dimensional face, and one lattice point at at a vertex. See Figure 2. In this example, $\varphi$ is given by $\varphi(x,y) = (x, y, -x + 2y, -x - y, -y)$ and $\pi$ is given by $\pi(w_1,w_2,w_3,w_4,w_5) = (w_3 + w_1 - 2w_2, w_5+w_2, w_4 + w_1 + w_2)$. In toric geometry, $A$ is the list of primitive generators for a fan of a weighted blow-up of the Hirzebruch surface of type two at a torus fixed point. 
\end{example}

\begin{figure}

\begin{center}
    \begin{tikzpicture}[decoration=border]

    \begin{scope}[scale=1.5]
    \draw [thick, postaction={decorate}]
        (-4,1.5) -- (-1,1.5);
    \draw [thick, postaction={decorate}]
        (-2.5,0) -- (-2.5,3);
    \draw[ ultra thick, ->](-2.5,1.5) -- (-1.8,1.5);
    \draw[ ultra thick, ->](-2.5,1.5) -- (-2.5,2.2);
    \draw[ ultra thick, ->](-2.5,1.5) -- (-2.5,0.8);
    \draw[ ultra thick, ->](-2.5,1.5) -- (-3.2,2.9);
    \draw[ ultra thick, ->](-2.5,1.5) -- (-3.2,0.8);
    \draw [thick, postaction={draw,decorate, decoration={border, amplitude=0.09cm, angle=90, segment length = .25cm}}]
        (0,0) -- (3,1.5);
    \draw [thick, postaction={draw,decorate, decoration={border, amplitude=0.09cm, angle=90, segment length = .25cm}}]
        (0,1.5) -- (3,3);
    \draw [thick, postaction={draw,decorate, decoration={border, amplitude=0.09cm, angle=90, segment length = .25cm}}]
        (0,3) -- (0,0);
    \draw [thick, postaction={draw,decorate, decoration={border, amplitude=0.09cm, angle=90, segment length = .25cm}}]
        (3,3) -- (3,0);
    \draw [thick, postaction={draw,decorate, decoration={border, amplitude=0.09cm, angle=90, segment length = .25cm}}]
        (3,0) -- (0,3);
    \draw [thick, postaction={draw,decorate, decoration={border, amplitude=0.09cm, angle=90, segment length = .25cm}}]
        (0,0) -- (3,0) -- (0,0);
    \draw [thick, postaction={draw,decorate, decoration={border, amplitude=0.09cm, angle=90, segment length = .25cm}}]
        (0,3) -- (3,3) -- (0,3);
    
    \filldraw[gray] (4,0.05) -- (7,0.05) -- (7,3) -- (4,3) -- (4,0.05);
    \draw[white, ultra thick] (4,3) -- (7,0);
    \draw[white, ultra thick] (3.95,1.475) -- (7,3);
    \draw[white, ultra thick] (4,0) -- (7.05,1.525);
    \draw[white, ultra thick] (3.95,3) -- (7.05,3);
    \draw[white, ultra thick] (4.1,0.05) -- (6.95,0.05);
    \draw[red, ultra thick] (4.15, 0) -- (6.95, 0);
    \filldraw[white] (4,0) circle (2.5pt);
    \filldraw[black] (4,0) circle (1.5pt);

    \end{scope}
    
\end{tikzpicture}

\end{center}
\caption{$A$ from \cref{ex:blhirz2} is depicted on the left. The corresponding hyperplane arrangement is depicted in the middle  with ``hairs" to designate the orientations. The eight strata are depicted on the right with the two dimensional strata in gray, the one dimensional stratum in red, and the zero dimensional stratum in black.} 
\label{fig:p2}
\end{figure}
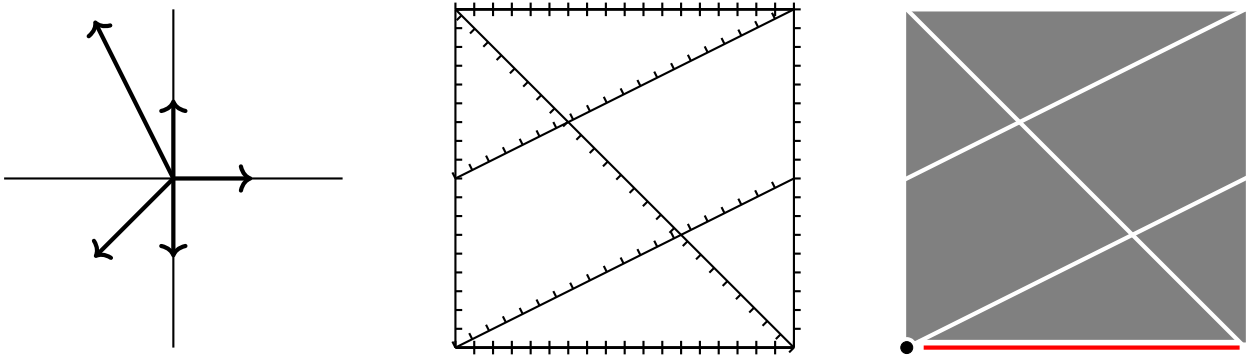

\subsection{Connection to toric geometry} \label{subsec:toricintro}

We now explain how our results apply to toric geometry. 
Suppose that $X= X_\Sigma$ is a semiprojective toric variety with simplicial fan $\Sigma$.\footnote{There is still a Bondal-Thomsen collection and an interpretation of \cref{thm:main} more generally, but we assume semiprojectivity in order to discuss the secondary fan and to simplify some statements.}
If $M$ is the character lattice of $X$, there is a map $\varphi$ from $M$ to the free abelian group on the toric divisors of $X_\Sigma$ given by pairing with the primitive generators of $\Sigma$ and whose cokernel is naturally identified with the class group of $X$. 
After tensoring with $\RR$ (and choosing a basis of $M$), we are precisely in the set-up of \cref{subsec:combintro} with $k = |\Sigma(1)|$ the number of rays of $\Sigma$. 
Then, we have a half-open zonotope $Z = Z_X$ in the real class group of $X$ defined as in \eqref{eq:zono}.
The Bondal-Thomsen collection $\Theta = \Theta_X$ consists of the elements of the class group of $X$ with image in $-Z$. 
As noted above, $\Theta$ has received considerable recent attention as the sheaves $\mathcal{O}_X(-d)$ with $-d \in \Theta$ are natural generators of the derived category and were used to solve several homological conjectures concerning toric varieties \cite{favero2023rouquier,hanlon2024resolutions,ballard2024king}. 
We refer to \cite{hanlon2024resolutions,ballard2024king} for more extensive discussions of the definition of $\Theta$. However, we note that $-\Theta$ has image in the effective cone $\mathrm{Eff}(X)$ of $X$ which is the support of the secondary fan $\Sigma_{GKZ}(X)$ which encodes the toric birational geometry of $X$. 

In this setting, our \cref{prop:bijection}, which is the first sentence of \cref{thm:main}, follows from combining \cite{bondal2006derived} and \cite{achinger2015characterization} and is implicit in \cite{hanlon2024resolutions,ballard2024king}, where the stratification by toric hyperplanes is at times referred to as the Bondal stratification.\footnote{Strictly speaking, this claim is only true when $\Sigma$ contains at least one smooth cone so that $\varphi^{-1}(\ZZ^k) = \ZZ^n$ in \cref{thm:main} as the toric literature works with the Bondal stratification on $M_\RR/M$. In general, \cref{thm:main} instead concerns a stratification on a torus that admits a finite-sheeted cover by $M_\RR/M$.} 
However, \cref{thm:main} not only makes this correspondence explicit and purely combinatorial, but it also gives new insight on the combinatorial relationship between the Bondal stratification and the half-open zonotope $Z$ as in the following corollary.

\begin{maincor} \label{thm:maintoric} Suppose that $\tau$ is the minimal face of $\mathrm{Eff}(X)$ containing the image of $d \in -\Theta$. Then, 
\[ \dim \tau + \dim S_d  = k - | J_{S_d} | \]
where $S_d$ is the $\Phi$-stratum of the Bondal stratification corresponding to $-d$.
Moreover, $\tau$ is naturally identified with the effective cone of a $\dim(S_d)$ toric variety whose Bondal-Thomsen collection is in bijection with the subset of $\Theta_X$ corresponding to strata $S$ with $J_{S_d} \subseteq J_S$.  
\end{maincor}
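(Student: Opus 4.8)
The plan is to deduce \cref{thm:maintoric} from \cref{thm:main} together with its refinement in \cref{sec:faces}; the only substantive new ingredient is a dictionary between the faces of the zonotope and the faces of $\mathrm{Eff}(X)$. First I would note that, since the rays of $\Sigma$ span $N_\RR$, the map $\varphi$ is injective, so $\ker\varphi = 0$ and the last term of \cref{eq:dimension} vanishes. Hence \cref{thm:main} already gives $\dim F_p + \dim S = k - |J_S|$ for every stratum $S$, where $F_p$ denotes the minimal face of $\bar Z$ containing the corresponding lattice point. Writing $p$ for the image of $d$ in the real class group (so that $p\in Z$, since $-d\in\Theta$) and $S_d$ for the $\Phi$-stratum it labels under \cref{prop:bijection}, the displayed identity of the corollary reduces to $\dim\tau = \dim F_p$ with $\tau$ the minimal face of $\mathrm{Eff}(X)$ containing $p$. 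I would in fact prove that $\tau$ and $F_p$ have the same linear span, and identify both explicitly.

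For the dictionary I would use that $\mathrm{Eff}(X) = \cone([D_\rho] : \rho\in\Sigma(1))$ (standard for semiprojective toric varieties) and that, by construction, $\bar Z = \sum_\rho [0,[D_\rho]]\subseteq\mathrm{Eff}(X)$ with $\pi(e_\rho) = [D_\rho]$. The key elementary observation is that a point $q\in Z$ can lie only on the ``lower'' faces of $\bar Z$: if $\ell$ is a linear functional and $q$ lies on the face of $\bar Z$ maximizing $\ell$, then expressing $q = \sum_\rho t_\rho[D_\rho]$ with all $t_\rho\in[0,1)$ forces $\ell([D_\rho])\le 0$ for every $\rho$ (otherwise $\ell(q) < \max_{\bar Z}\ell$), so that this face equals $\sum_{\rho\in I}[0,[D_\rho]]$ with $I = \{\rho : \ell([D_\rho]) = 0\}$ and $-\ell\ge 0$ on $\mathrm{Eff}(X)$; the matching face $\cone([D_\rho] : \rho\in I)$ of $\mathrm{Eff}(X)$ has the same linear span. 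This yields a span-preserving, inclusion-compatible correspondence between the faces of $\bar Z$ through $p$ and the faces of $\mathrm{Eff}(X)$ through $p$, so the minimal members agree on spans. Feeding in the \cref{sec:faces} description of $F_p$ — it is the half-open zonotope obtained by restricting the construction of \cref{subsec:combintro} to $V := \mathrm{span}(\wt{S} - u)$ for a lift $\wt{S}$ of $S_d$, with closure $\sum_{\rho\notin J_{S_d}}[0,[D_\rho]]$ — I would conclude $F_p = \sum_{\rho\notin J_{S_d}}[0,[D_\rho]]$ and $\tau = \cone([D_\rho] : \rho\notin J_{S_d})$, hence $\dim\tau = \dim F_p$ and the first assertion.

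For the second assertion, the restriction used in \cref{sec:faces} is exactly the input of \cref{subsec:combintro} for a toric variety $X'$: since $\ker\varphi = 0$ one gets $V = \mathrm{span}(v_\rho : \rho\in J_{S_d})^{\perp}$, so $\dim V = \dim S_d =: m$; the $v_\rho$ with $\rho\in J_{S_d}$ restrict to $0$ on $V$, while the $v_\rho|_V$ with $\rho\notin J_{S_d}$ span $V^*$. This is the data of an $m$-dimensional semiprojective simplicial toric variety $X'$ with $k - |J_{S_d}|$ rays — a natural model: quotient $N$ by $\langle v_\rho : \rho\in J_{S_d}\rangle$ and take a compatible fan with rays the images of the $v_\rho$, $\rho\notin J_{S_d}$ — and since $\mathrm{Eff}$ and the Bondal--Thomsen collection depend only on the rays, the precise choice is immaterial. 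By \cref{sec:faces} the real class group of $X'$ is naturally $\mathrm{span}(F_p)$, its half-open zonotope $Z_{X'}$ is the face $F_p\cap Z$, and $\mathrm{Eff}(X') = \cone([D'_\rho] : \rho\notin J_{S_d})$ is carried onto $\cone([D_\rho] : \rho\notin J_{S_d}) = \tau$; this is the asserted natural identification, with $\dim X' = m = \dim S_d$. Finally, $\Theta_{X'}$, the set of class-group elements of $X'$ lying in $-Z_{X'}$, becomes after negation the lattice points of $Z$ lying on $F_p$; since each lattice point lies in the relative interior of its minimal face and $F_p$ is a face, these are precisely the lattice points whose minimal face is contained in $F_p$, i.e.\ (by the inclusion-reversing clause of \cref{thm:main}) those labeling strata $S$ with $J_{S_d}\subseteq J_S$. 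Composing the bijections of \cref{prop:bijection} for $X$ and for $X'$ gives the claimed bijection.

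The step I expect to be the main obstacle is this face dictionary: promoting the easy span equality to the explicit identifications $\tau = \cone([D_\rho] : \rho\notin J_{S_d})$ and $F_p = \sum_{\rho\notin J_{S_d}}[0,[D_\rho]]$ requires carefully matching the ``lower face'' analysis with the restricted-construction description of $F_p$ from \cref{sec:faces} and checking that the relevant index sets genuinely coincide (watching, for instance, repeated or trivial divisor classes), and one should also confirm that the combinatorial data attached to $S_d$ is realized by an honest semiprojective simplicial toric variety. Everything else — both displayed equations and the Bondal--Thomsen bijection — then follows formally from bijections already established.
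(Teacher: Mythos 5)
Your proposal is correct and follows essentially the same route as the paper's proof: deduce the first displayed identity from \cref{thm:main} using $\ker\varphi=0$ and the fact that $\tau$ shares its span with the minimal face $F_p$ of the zonotope, then use \cref{prop:restrictedface} and the restricted diagram \eqref{eq:restrictdiagram} to build a lower-dimensional semiprojective simplicial toric variety from the rays $\rho\notin J_{S_d}$ whose half-open zonotope is $F_p$ and whose Bondal--Thomsen collection therefore matches the lattice points on $F_p$. The one place you add real detail beyond the paper is the face dictionary: where the paper simply asserts that the boundary of $Z_X$ lies on the boundary of $\mathrm{Eff}(X)$ and reads off $\tau=\mathrm{span}(F_p)$, you actually prove it by the ``lower face'' argument with the linear functional $\ell$ and the representation $p=\sum_\rho t_\rho[D_\rho]$, $t_\rho\in[0,1)$, which is a worthwhile clarification of a terse step; the rest of the argument (including making a choice of GIT chamber for the lower-dimensional variety and noting the construction only depends on the rays) aligns with what the paper does.
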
 

 We prove \cref{thm:maintoric} in \cref{sec:toric}, where we also make a few observations about additional consequences and related concepts. Note that both \cref{ex:hirz2} and \cref{ex:blhirz2} come from semiprojective toric varieties with simplicial fans and hence \cref{thm:maintoric} applies. 
 We leave the following question for future research.

 \begin{question}
    In order to resolve a toric subvariety in \cite{hanlon2024resolutions}, one restricts the Bondal stratification to a subtorus of $M_\RR/M$. 
    It would be interesting to extend \cref{thm:main} and \cref{thm:maintoric} to this setting.
    That is, what are the dimensions of the induced strata corresponding to Bondal-Thomsen elements that appear in the resolution of a given toric subvariety?
 \end{question}

\subsection{Acknowledgements}
We thank Jeff Hicks, Oleg Lazarev, and all the authors of \cite{ballard2024king} for useful conversations that shaped our view of the combinatorics of the Bondal-Thomsen collection.
Additionally, we thank the anonymous referee for comments that improved the paper.
This paper resulted from the SHUR program for undergraduate research at Dartmouth College in Summer 2024.
We are grateful to the other SHUR organizer (Ina Petkova) and participants (Ashton Lewis, Zachary Okajli, and Benjamin Shapiro)  for contributing to a stimulating summer research environment.

A.H., D.P., S.S., and B.S. were supported by National Science Foundation grant DMS-2404882.
The SHUR program was supported by Dartmouth College and by National Science Foundation CAREER grant DMS-2145090.

\section{Proof of \cref{thm:main}} \label{sec:proof}
In this section, we prove \cref{thm:main} in several steps. 
Before analyzing dimensions, we start with showing that there is a bijection between lattice points of the half-open zonotope and $\Phi$-strata.
\begin{prop} \label{prop:bijection}
The map $\pi \circ \Phi$ induces a bijection between $\Phi$-strata of $\RR^n/\varphi^{-1}(\ZZ^k)$ and lattice points of $Z$.
\end{prop}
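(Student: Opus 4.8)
The plan is to write the induced map explicitly as $S_c \mapsto \pi(c)$ for $c \in \ZZ^k$ and to verify directly that it is a well-defined bijection onto the set $Z \cap \pi(\ZZ^k)$ of lattice points of $Z$. The first ingredient is the way $\Phi$ interacts with the translation lattice $\varphi^{-1}(\ZZ^k)$: for $w \in \varphi^{-1}(\ZZ^k)$ each $\langle w, v_j\rangle = \varphi(w)_j$ is an integer, so $\lceil\langle u+w,v_j\rangle\rceil = \lceil\langle u,v_j\rangle\rceil + \langle w,v_j\rangle$, i.e. $\Phi(u+w) = \Phi(u) + \varphi(w)$ for every $u \in \RR^n$. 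Since $\mathrm{im}(\varphi) = \ker \pi$, this shows that $\pi\circ\Phi$ (though not $\Phi$ itself) is invariant under $\varphi^{-1}(\ZZ^k)$, hence descends to a map $\wb{\Phi}\colon \RR^n/\varphi^{-1}(\ZZ^k) \to \pi(\ZZ^k)$ that is constant with value $\pi(c)$ on $S_c$; in particular $\wb{\Phi}$ factors through the set of $\Phi$-strata.

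Next I would determine when $\wt{S}_c$ is nonempty. From $\lceil x\rceil = c \iff c-1 < x \le c$ we get $\wt{S}_c = \varphi^{-1}\!\big(c + (-1,0]^k\big)$, and since $c + (-1,0]^k = c - [0,1)^k$, the stratum $\wt{S}_c$ is nonempty precisely when some $b \in [0,1)^k$ satisfies $c - b \in \mathrm{im}(\varphi) = \ker\pi$, i.e. precisely when $\pi(c) = \pi(b) \in Z$. This shows at once that $\wb{\Phi}$ takes values in the lattice points of $Z$ and that every lattice point of $Z$ is attained. For injectivity, suppose two $\Phi$-strata satisfy $\pi(c) = \pi(c')$; then $c'-c \in \ker\pi = \mathrm{im}(\varphi)$, so pick $w \in \RR^n$ with $\varphi(w) = c'-c$, which lies in $\varphi^{-1}(\ZZ^k)$ because $\varphi(w)$ is integral, and the translation identity shows $u \mapsto u+w$ carries $\wt{S}_c$ bijectively onto $\wt{S}_{c'}$, so that $S_c = S_{c'}$ in the quotient; conversely $S_c = S_{c'}$ forces $\pi(c) = \pi(c')$ by the same identity. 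Assembling these observations, $S_c \mapsto \pi(c)$ is a bijection from the $\Phi$-strata onto $Z \cap \pi(\ZZ^k)$, and by construction this bijection is the one induced by $\pi\circ\Phi$.

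The only genuinely delicate point, and the one I would be most careful about, is reconciling the two half-open conventions: the half-open zonotope $Z = \pi([0,1)^k)$ is half-open ``towards $0$'', while the ceiling function makes $\wt{S}_c$ half-open ``towards $c$'', and the elementary identity $c + (-1,0]^k = c - [0,1)^k$ is exactly what makes the two descriptions line up. Everything else is routine bookkeeping with the quotient by $\varphi^{-1}(\ZZ^k)$.
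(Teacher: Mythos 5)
Your proof is correct and rests on the same two ingredients the paper uses: the identity $\Phi(u+w) = \Phi(u) + \varphi(w)$ for $w \in \varphi^{-1}(\ZZ^k)$ and the observation that $\Phi(u) - \varphi(u) \in [0,1)^k$ (equivalently, your $\wt{S}_c = \varphi^{-1}(c-[0,1)^k)$). The reorganization --- establishing equivariance up front and then the nonemptiness criterion $\wt{S}_c \ne \emptyset \iff \pi(c)\in Z$ --- is a marginally tidier packaging that makes well-definedness on the quotient more explicit, but it is the same argument as the paper's.
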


\begin{proof}
Since $\img(\Phi) \subseteq \ZZ^k$, it is clear that $\pi \circ \Phi$ has image in $\pi(\ZZ^k)$. 
Also, if $u \in \RR^n$, then the $j$th coordinate of $\Phi(u)$ is $\lceil \langle u, v_j \rangle \rceil = \langle u, v_j \rangle + c_j$ where $c_j \in [0,1)$.
That is, $\Phi(u) = \varphi(u) + c$ where $c \in [0,1)^k$, and we see that the image of $\pi \circ \Phi$ also lies in $Z$.
Since $\Phi$ is constant on $\Phi$-strata, we have shown that $\pi \circ \Phi$ induces a map from $\Phi$-strata to $\pi(\ZZ^k) \cap Z$. 

We now show that the map is injective. Assume that for $u, v \in \RR^n$, we have $\pi(\Phi(u)) = \pi(\Phi(v))$. This implies that $\Phi(u) - \Phi(v) = \varphi(x)$ for some $x \in \RR^n$. However, since $\img(\Phi) \subseteq \ZZ^k$, we note that $\Phi(u)-\Phi(v) = \varphi(x) \in \ZZ^k$, i.e., $x \in \varphi^{-1}(\ZZ^k)$. Then, we note that
\[ \Phi(v+x) = \lceil\varphi(v+x)\rceil = \lceil\varphi(v)\rceil+\varphi(x) = \Phi(v)+\varphi(x)\]
which allows us to rewrite our earlier assumption as 
\[\Phi(u) = \Phi(v+x).\]
Thus, $u$ and $v+x$ are in the same pre-image of $\Phi$. Since $x \in \varphi^{-1}(\ZZ^k)$, $u$ and $v$ are in the same $\Phi$-stratum and thus the map is injective.

Next, we show surjectivity. Let $p \in Z \cap \pi(\ZZ^k)$.
Since $\pi$ is surjective and $Z = \pi([0,1)^k)$, there exists $x \in [0,1)^k$ such that $p = \pi(x)$. Thus, 
\[ \pi^{-1}(p) = \{ x + \varphi(y) \mid y \in \RR^n \}\]
Moreover, as $p \in \pi(\ZZ^k)$, there exists $m \in \ZZ^k$ such that $m \in \pi^{-1}(p)$. Thus, there exists a $y \in \RR^n$ such that
\[x + \varphi(y) = m.\]
However, as $x \in [0, 1)^k$, the only integral value $x+\varphi(y)$ can take is $\lceil\varphi(y)\rceil$. Thus, we have that
\[m = \lceil\varphi(y)\rceil = \Phi(y)\]
showing $\pi(\Phi(y)) = p$.
We have therefore deduced that the map is surjective. 
\end{proof}

It remains to verify \eqref{eq:dimension}, that the minimal face of $Z$ containing the image of a $\Phi$-stratum is determined by $J_S$, and that this relation is inclusion-reversing.
We will use a lemma from convex geometry.

\begin{lem}[{\cite[Theorem 6.6]{rockafellar1997convex}}] \label{lem:relint}
    Let $C \subseteq \RR^n$ be a convex set and let $f\colon \RR^n \to \RR^m$ be a linear map. Then,
    \[\relint(f(C)) = f(\relint(C))\]
    where $\relint$ denotes the relative interior. 
\end{lem}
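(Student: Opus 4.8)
The plan is to establish the two inclusions $f(\relint(C)) \subseteq \relint(f(C))$ and $\relint(f(C)) \subseteq f(\relint(C))$ separately, after disposing of the trivial case $C = \emptyset$, in which both sides are empty. The only inputs beyond elementary manipulations are two classical facts about finite‑dimensional convex sets from \cite{rockafellar1997convex}: that a nonempty convex set has nonempty relative interior, and the \emph{line segment principle}, that if $x_0 \in \relint(C)$ and $x_1 \in \cl(C)$ then $(1-\lambda)x_0 + \lambda x_1 \in \relint(C)$ for every $\lambda \in [0,1)$. A convenient reformulation of the latter that I will use is: for a nonempty convex set $C$ and a point $z$, one has $z \in \relint(C)$ if and only if for every $w \in C$ there is $\epsilon > 0$ with $z + \epsilon(z - w) \in C$, i.e. the segment $[w,z]$ can be prolonged slightly beyond $z$ inside $C$. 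The forward direction of this reformulation is immediate from the definition of relative interior (the point $z + \epsilon(z-w)$ stays in $\aff(C)$ and lies within $\epsilon|z-w|$ of $z$); the reverse direction follows from the line segment principle applied to a chosen $x_0 \in \relint(C)$, which exists since $C$ is nonempty.

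For the inclusion $f(\relint(C)) \subseteq \relint(f(C))$, take $x \in \relint(C)$ and let $w' \in f(C)$ be arbitrary, say $w' = f(w)$ with $w \in C$. By the reformulation there is $\epsilon > 0$ with $x + \epsilon(x - w) \in C$, and applying the linear map $f$ gives $f(x) + \epsilon\big(f(x) - w'\big) = f\big(x + \epsilon(x-w)\big) \in f(C)$. Since $w' \in f(C)$ was arbitrary and $f(C)$ is convex, the reformulation applied to $f(C)$ yields $f(x) \in \relint(f(C))$.

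For the reverse inclusion $\relint(f(C)) \subseteq f(\relint(C))$, fix $z \in \relint(f(C))$ and choose any $x_0 \in \relint(C)$, which exists because $C$ is nonempty; by the previous paragraph $f(x_0) \in \relint(f(C))$. Applying the reformulation to $z \in \relint(f(C))$ with the point $f(x_0) \in f(C)$, there is $\epsilon > 0$ with $y \coloneq z + \epsilon(z - f(x_0)) \in f(C)$, so $y = f(x_1)$ for some $x_1 \in C$. Rearranging the equation for $y$ gives $z = \tfrac{1}{1+\epsilon}f(x_1) + \tfrac{\epsilon}{1+\epsilon}f(x_0) = f(x)$, where $x \coloneq (1-\lambda)x_0 + \lambda x_1$ with $\lambda \coloneq \tfrac{1}{1+\epsilon} \in [0,1)$. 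By the line segment principle, $x \in \relint(C)$, hence $z = f(x) \in f(\relint(C))$.

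I expect the second inclusion to be the only genuine obstacle. It is precisely where the finite‑dimensionality enters, through the nonemptiness of $\relint(C)$ (in an infinite‑dimensional setting the statement can fail), and the slightly delicate point is to realize a preimage of $z$ as a convex combination of a point of $C$ with a point of $\relint(C)$ having strictly positive weight, so that the line segment principle applies. The first inclusion, by contrast, is a purely formal consequence of linearity together with the segment characterization of relative interior.
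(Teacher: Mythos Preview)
The paper does not supply a proof of this lemma at all: it is stated with a citation to \cite[Theorem 6.6]{rockafellar1997convex} and then used as a black box in the proof of \cref{lem:face}. So there is no in-paper proof to compare against.

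Your argument is correct. It is, in fact, essentially Rockafellar's own proof of Theorem~6.6: both inclusions are derived from the line segment principle (his Theorem~6.1) together with the nonemptiness of $\relint(C)$ in finite dimensions (his Theorem~6.2), and the ``prolongation'' characterization of relative interior you use is exactly his Theorem~6.4. One minor remark: in the second inclusion you note that $f(x_0)\in\relint(f(C))$ by the first paragraph, but you only use $f(x_0)\in f(C)$, so that observation is unnecessary. Otherwise the write-up is clean and the identification of where finite-dimensionality enters is accurate.
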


Using \cref{lem:relint}, we will identify a face of the cube in $\RR^k$ that projects to the minimal face of the zonotope containing the image of a $\Phi$-stratum. 
Namely let $S$ be a $\Phi$-stratum and $p = \pi(\Phi(S)) \in Z$ the corresponding point under \cref{prop:bijection}. 
We set $F_p$ to be the minimal face of $Z$ containing $p$ and 
\begin{equation} \label{eq:fs} 
    F_S = \left\{ x \in [0,1)^k \mid x_j = 0 \text{ for all } j \in J_S \right\} 
\end{equation}
where $J_S$ is as in \eqref{eq:js}.
These two faces are related by the following lemma.

\begin{lem} \label{lem:face}
    $F_p = \pi(F_S)$. 
\end{lem}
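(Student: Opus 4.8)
\textbf{Proof proposal for \cref{lem:face}.}

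The plan is to show the two set inclusions $\pi(F_S) \subseteq F_p$ and $F_p \subseteq \pi(F_S)$, using \cref{lem:relint} to translate the minimality of $F_p$ into a statement about relative interiors. First I would record the key geometric fact that links the combinatorics to the zonotope: for a point $u$ in (a lift of) the $\Phi$-stratum $S$, the vector $c = \Phi(u) - \varphi(u) \in [0,1)^k$ has $c_j = 0$ exactly when $\langle u, v_j\rangle \in \ZZ$, i.e. exactly when $u \in h_j^{-1}(0)$; and since $S$ is connected this happens for all $u \in \wt S$ precisely when $j \in J_S$, while for $j \notin J_S$ we can find some $u \in \wt S$ with $c_j \neq 0$. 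Thus $\pi(\Phi(u)) = \pi(c)$ and, as we range over $\wt S$, the $c$'s sweep out a subset of $F_S$ whose $j$-th coordinate is genuinely nonzero for each $j \notin J_S$. This is really the heart of the matter and where I expect the main work to lie: pinning down that the image $\{\Phi(u)-\varphi(u) : u \in \wt S\}$ is not just contained in $F_S$ but is "spread out enough" inside it, specifically that its image under $\pi$ meets the relative interior of $\pi(\overline{F_S})$.

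For the inclusion $\pi(F_S) \subseteq F_p$: note $\overline{F_S}$ is a face of the cube $[0,1]^k$, so $\pi(\overline{F_S})$ is a face of $\bar Z$ (projections of faces of a cube are faces of a zonotope — a standard fact, and I would cite or quickly justify it). It contains $p = \pi(c)$, hence contains the minimal face $F_p$ by definition; I then need the reverse containment $\pi(\overline{F_S}) \subseteq F_p$, which follows once I show $p$ lies in the \emph{relative interior} of $\pi(\overline{F_S})$, because a face of a polytope is the minimal face containing any of its relative-interior points. By \cref{lem:relint}, $\relint(\pi(\overline{F_S})) = \pi(\relint(\overline{F_S}))$, and $\relint(\overline{F_S})$ is the set of points in $\overline{F_S}$ with all coordinates $j \notin J_S$ strictly between $0$ and $1$. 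So it suffices to produce a single $u \in \wt S$ with $0 < \langle u, v_j\rangle - \lfloor \cdot \rfloor < 1$, equivalently $\langle u, v_j\rangle \notin \ZZ$, for every $j \notin J_S$ simultaneously. Since for each such $j$ the "bad" set $\wt S \cap h_j^{-1}(0)$ is the intersection of $\wt S$ with a proper affine subspace and $\wt S$ has nonempty interior in its affine span (it is a region of a hyperplane arrangement intersected with its defining hyperplanes), a finite union of such proper intersections cannot cover $\wt S$; picking $u$ outside all of them does the job.

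Conversely, for $F_p \subseteq \pi(F_S)$: having shown $p \in \relint(\pi(\overline{F_S}))$ and that $\pi(\overline{F_S})$ is a face of $\bar Z$, the minimal face of $\bar Z$ containing $p$ is exactly $\pi(\overline{F_S})$, i.e. $\overline{F_p} = \pi(\overline{F_S})$. Passing to the half-open versions, $F_p = \overline{F_p} \cap Z$ and one checks $\pi(F_S) = \pi(\overline{F_S}) \cap Z$ — here I would use that $Z = \pi([0,1)^k)$ and that the half-open structure is compatible with taking faces (the half-open face $F_S$ is obtained from the half-open cube by the same coordinate conditions), so no boundary points are lost or gained. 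Combining the two inclusions gives $F_p = \pi(F_S)$. The one subtlety to handle carefully is the half-open bookkeeping in this last step, and the genuinely substantive point remains the relative-interior argument, namely that a $\Phi$-stratum, being a region of the arrangement (possibly fattened to include some of its bounding hyperplanes), is not contained in any hyperplane $h_j^{-1}(0)$ with $j \notin J_S$ and therefore not in a finite union of them.
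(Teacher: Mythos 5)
Your argument hinges on the claim that ``projections of faces of a cube are faces of a zonotope,'' which you describe as a standard fact you would cite or quickly justify. This is false in general: with $\pi \colon \RR^2 \to \RR$ given by $\pi(x_1,x_2) = x_1 + x_2$, the face $\{0\} \times [0,1]$ of the square projects to $[0,1]$, which is not a face of $\bar Z = [0,2]$. Both of your inclusions rely on $\pi(\overline{F_S})$ being a face of $\bar Z$, and that this happens for the particular $F_S$ arising from $\Phi$-strata is essentially the content of \cref{lem:face} itself, so invoking it up front is circular. The paper sidesteps this by running the argument in the opposite direction: it defines $G_p := \pi^{-1}(F_p) \cap [0,1)^k$, which is \emph{automatically} a face of the half-open cube because $F_p$ is a face of $Z$ and $\pi$ is linear, and then proves $G_p = F_S$ by matching index sets.

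That said, your key computation is sound and useful: finding $u \in \wt S$ with $c = \Phi(u) - \varphi(u)$ satisfying $c_j = 0$ exactly when $j \in J_S$ gives $c \in \relint(\overline{F_S})$, hence $p \in \relint(\pi(\overline{F_S}))$ by \cref{lem:relint}. This is a clean dual to the paper's move of starting from $z \in \relint(G_p)$ with $\pi(z) = p$ and showing $z = \Phi(y') - \varphi(y')$ for some $y' \in \wt S$. You could repair your proof without the false lemma: the point $c$ lies in the face $G_p$ of $[0,1]^k$ (since $\pi(c) = p \in F_p$) while also lying in $\relint(\overline{F_S})$, and a face containing a relative-interior point of a convex subset of the polytope contains that whole subset, giving $\overline{F_S} \subseteq G_p$; the reverse inclusion $G_p \subseteq \overline{F_S}$ then follows by applying your argument to a relative-interior point $z$ of $G_p$ with $\pi(z) = p$. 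As written, however, the proof is not complete.
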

\begin{proof}
    We will show that $\pi^{-1}(F_p) \cap [0,1)^k = F_S$ from which the statement follows as $[0,1)^k$ surjects onto $Z$ under $\pi$.  Since $\pi$ is a linear map, it follows directly from the definition of face of a convex set that $\pi^{-1}(F_p) \cap [0,1)^k$ is a face of $[0,1)^k$. Denote this face by $G_p$ and note that as a face of $[0,1)^k$, we have
    \[ G_p \coloneq \{ x \in [0, 1)^k : x_j = 0 \text{ for all } j \in I \}\]
    for some $I \subset \{1, \hdots, k\}$. 
    Our claim then reduces to showing that $I = J_S$. 

    First, we show $I \subseteq J_S$. 
    Let $y \in \wt{S}$ for some lift $\wt{S}$ of $S$ and $x = \Phi(y)-\varphi(y) \in [0,1)^k$.
    Since $\pi(x) = p$, we have that $x \in G_p$. 
    Then, we have
    \begin{align*}
        j \in I &\implies x_j = 0 \\
        &\iff \varphi(y)_j = \langle y, v_j \rangle  \in \ZZ \\
        &\iff h_j(y) = 0 \\
        &\iff j \in J_S.
    \end{align*}
    Therefore, $I \subseteq J_S$. 
    
    For the reverse inclusion, we note that $p \in \relint(F_p)$.
    By applying \cref{lem:relint} with $C = G_p$ and $f = \pi$, we find that there is a point $z \in \relint(G_p)$ such that $\pi(z) = p$.
    Then, if $y \in \wt{S}$ for some lift $S$, we see that
    \[ \Phi(y)-z \in \ker(\pi) = \img(\varphi), \]
    that is, there is a $y' \in \RR^n$ such that $\Phi(y) - z = \varphi(y')$. Next, note that as $z \in [0, 1)^k$ and $\Phi(y) \in \ZZ^k$, we have 
    \[ \Phi(y') = \lceil\varphi(y')\rceil = \lceil\Phi(y)-z\rceil = \lceil\Phi(y)\rceil = \Phi(y). \]
    Thus, $y' \in \wt{S}$ and $\Phi(y') - \varphi(y') = z \in G_p$. As a consequence, we have
    \begin{align*}
        j \in J_S &\implies \langle y', v_j \rangle = \varphi(y')_j \in \ZZ \\ 
        &\iff z_j = 0 \\
        &\iff j \in I
    \end{align*}
    where the last equivalence follows from $z \in \relint(G_p)$. 
    Therefore, we have also deduced that $J_S \subseteq I$.
\end{proof}

With \cref{lem:face}, we are ready to prove our main result. 

\begin{proof}[Proof of \cref{thm:main}] 
    In \cref{prop:bijection}, we showed that $\pi \circ \Phi$ induces a bijection between the $\Phi$-strata of $\RR^n/\varphi^{-1}(\ZZ^k)$ and $\pi(\ZZ^k) \cap Z$.
    Let $S$ be a $\Phi$-stratum and $p = \pi(\Phi(S))$ lie in a minimal face $F_p$ of $Z$. 
    By \cref{lem:face}, $F_p = \pi(F_S)$ where $F_S$ is as in \eqref{eq:fs}.
    It follows immediately that $F_p$ is entirely determined by $J_S$ and this assignment is inclusion-reversing.
    
    It remains to prove \eqref{eq:dimension}. From \cref{lem:face} and the rank-nullity theorem applied to $\pi$ restricted to the span of $F_S$, we have
    \[ \dim(F_S) = \dim(F_p) + \dim(\ker(\pi)\cap F_S) = \dim(F_p) + \dim(\img(\varphi)\cap F_S). \]
    From its definition, we know that $\dim(F_S) = k - |J_S|$, and it remains to analyze $\dim(\img(\varphi)\cap F_S)$. 
    We claim that $\img(\varphi) \cap \mathrm{span}(F_S) = \varphi(\mathrm{span}(\wt{S}-u))$ for any lift $\wt{S}$ and $u \in \wt{S}$.
    Assuming that, applying the rank-nullity theorem again to $\varphi$ restricted to $\mathrm{span}(\wt{S}-u)$, we obtain
    \[ \dim(S) =  \dim(\img(\varphi)\cap F_S) + \dim\left(\ker \varphi \cap \mathrm{span}(\wt{S}-u)\right) \]
    from which \eqref{eq:dimension} follows.
    
    To conclude, let us verify our claim that $\img(\varphi) \cap \mathrm{span}(F_S) = \varphi(\mathrm{span}(\wt{S}-u))$. 
    That is, we need to show that 
    \[ \mathrm{span}(\wt{S}-u) = \left\{ w \in \RR^n \mid \langle w, v_j \rangle = 0 \text{ for all } j \in J_S \right\}. \]
    Denote the set on the right-hand side by $W_S$. The fact that $\mathrm{span}(\wt{S}-u) \subseteq W_S$ is immediate from the definition of $J_S$.  
    On the other hand, $\wt{S} -u$ is the intersection of a finite number of closed and open half-spaces and $W_S$ for which none of the half-spaces contain $\wt{S} -u$ in their boundary.
    That is, the closure of $\wt{S}-u$ is a full-dimensional polytope in $V$, and thus its span coincides with $W_S$.
\end{proof}

\section{Faces from restriction} \label{sec:faces}

In this section, we will further analyze the faces of $Z$ and show that \cref{thm:main} is compatible with restricting to certain lower-dimensional toric hyperplane stratifications.

Given a $\Phi$-stratum $S$, we define
\[ W_S \coloneq \left\{ w \in \RR^n \mid \langle w, v_j \rangle = 0 \text{ for all } j \in J_S  \right\} \]
as in the proof of \cref{thm:main} and consider the commutative diagram 
\begin{equation} \label{eq:restrictdiagram} 
    \begin{tikzcd}
        \RR^n \arrow{r}{\varphi} & \RR^k  \arrow{r}{\pi} & \mathrm{coker}(\varphi)\\%
        W_S \arrow{r}{\tilde{\varphi}} \arrow[hookrightarrow]{u} & \RR^{k-|J_S|} \arrow{r}{\tilde{\pi}} \arrow{u}{\iota} & \mathrm{coker}(\tilde{\varphi}) \arrow{u}{\tilde{\iota}}
    \end{tikzcd}
\end{equation}
where $\iota$ is the inclusion of the subspace where all coordinates in $J_S$ are zero, $\tilde{\varphi}$ is given by pairing with the $v_j$ for $j \not \in J_S$, $\tilde{\pi}$ is the projection to the cokernel of $\tilde{\varphi}$, and $\tilde{\iota}$ is induced by $\iota$. 

From the bottom row of \eqref{eq:restrictdiagram}, we get another stratification on $W_S/\tilde{\varphi}^{-1}(\ZZ^{k-|J_S|})$ and a zonotope $\wt{Z}$ in $\mathrm{coker}(\tilde{\varphi})$. 
We will now show that $\wt{Z}$ is naturally identified with the minimal face of $Z$ containing the lattice point corresponding to $S$.

\begin{prop} \label{prop:restrictedface}
    Let $p = \pi(\Phi(S))$ be the lattice point in $Z$ corresponding to $S$ in \cref{prop:bijection}, and, as before, let $F_p$ be the minimal face of $Z$ containing $p$. The map $\tilde{\iota}$ is injective and $\tilde{\iota}(\wt{Z}) = F_p$. 
\end{prop}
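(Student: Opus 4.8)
The plan is to obtain both assertions almost formally from the commutativity of \eqref{eq:restrictdiagram} together with \cref{lem:face}; the only nonformal ingredient is the very definition of $W_S$ as the common zero locus of the functionals $\langle\,\cdot\,,v_j\rangle$ with $j\in J_S$.

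I would begin with a bookkeeping observation. Since $\iota$ is the inclusion of $\RR^{k-|J_S|}$ onto the coordinate subspace of $\RR^k$ cut out by $x_j=0$ for $j\in J_S$, it carries the cube $[0,1)^{k-|J_S|}$ onto the half-open face $F_S$ of \eqref{eq:fs}. Taking $\wt Z=\tilde\pi\big([0,1)^{k-|J_S|}\big)$ and using $\tilde\iota\circ\tilde\pi=\pi\circ\iota$, one then gets
\[ \tilde\iota(\wt Z)=\tilde\iota\Big(\tilde\pi\big([0,1)^{k-|J_S|}\big)\Big)=\pi\Big(\iota\big([0,1)^{k-|J_S|}\big)\Big)=\pi(F_S)=F_p, \]
the last equality being exactly \cref{lem:face}. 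This establishes $\tilde\iota(\wt Z)=F_p$ with no appeal to injectivity.

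For injectivity of the linear map $\tilde\iota\colon\mathrm{coker}(\tilde\varphi)\to\mathrm{coker}(\varphi)$, I would take a class $[x]$ with $x\in\RR^{k-|J_S|}$ mapping to $0$; unwinding the definition of $\tilde\iota$, this means $\iota(x)=\varphi(y)$ for some $y\in\RR^n$. Every coordinate of $\iota(x)$ indexed by $J_S$ vanishes, so $\langle y,v_j\rangle=\varphi(y)_j=0$ for all $j\in J_S$, i.e.\ $y\in W_S$; comparing the remaining coordinates in $\iota(x)=\varphi(y)$ then gives $\tilde\varphi(y)=x$. Hence $[x]=0$ in $\mathrm{coker}(\tilde\varphi)$, so $\ker\tilde\iota=0$.

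I do not expect a serious obstacle here: essentially everything is a diagram chase once \eqref{eq:restrictdiagram} is set up, and the one place the specific geometry enters is the step $y\in W_S$ in the injectivity argument, which uses only that $W_S$ is cut out by precisely the equations $\langle\,\cdot\,,v_j\rangle=0$, $j\in J_S$ — a smaller subspace in the lower-left corner of \eqref{eq:restrictdiagram} would in general break injectivity. If desired, one can upgrade the conclusion to an isomorphism $\mathrm{coker}(\tilde\varphi)\cong\mathrm{span}(F_p)$, since $\wt Z=\tilde\pi\big([0,1)^{k-|J_S|}\big)$ spans $\mathrm{coker}(\tilde\varphi)$ (as $\tilde\pi$ is surjective and the cube spans $\RR^{k-|J_S|}$), whence $\tilde\iota\big(\mathrm{coker}(\tilde\varphi)\big)=\mathrm{span}\big(\tilde\iota(\wt Z)\big)=\mathrm{span}(F_p)$.
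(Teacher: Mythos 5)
Your proof is correct and follows essentially the same route as the paper: injectivity is shown by unwinding $\tilde\iota(\tilde\pi(x))=0$ to $\iota(x)=\varphi(y)$, using the $J_S$-coordinates to conclude $y\in W_S$ and hence $[x]=0$, and the face identification $\tilde\iota(\wt Z)=F_p$ follows from $\iota([0,1)^{k-|J_S|})=F_S$, commutativity of \eqref{eq:restrictdiagram}, and \cref{lem:face}. The only cosmetic differences are the order of the two claims and that you make explicit the step $x=\tilde\varphi(y)$ which the paper leaves implicit.
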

\begin{proof} Suppose that $0 = \tilde{\iota}(\tilde{\pi}(x)) = \pi(\iota(x))$. 
Then, $\iota(x) = \varphi(u)$ for some $u \in \RR^n$. 
However, $\iota(x)_j = 0$ for $j \in J_S$ so $u \in W_S$, and we see that $\tilde{\pi}(x) = \tilde{\pi}(\tilde{\varphi}(u)) = 0$ showing that $\tilde{\iota}$ is injective.

Now, note that $\iota([0,1)^{k-|J_S|}) = F_S$ where $F_S$ is as in \eqref{eq:fs}. Applying \cref{lem:face} and the commutativity of \cref{eq:restrictdiagram}, we obtain that $\tilde{\iota}(\wt{Z}) = F_p$.
\end{proof}

\begin{rem} 
Note that $S$ becomes a codimension zero $\tilde{\Phi}$-stratum in $W_S$. 
Moreover, \cref{thm:main} has a more straightforward proof for codimension zero strata.
Namely, if $S$ has codimenzion zero, then there is a point $u \in S$ such that $\varphi(u)$ has no integral coordinates and hence $\Phi(u) - \varphi(u) \in (0,1)^k$. 
Applying the finite-dimensional case of the open mapping theorem, $\pi$ must map $(0,1)^k$ to the interior of $Z$.
Thus, an alternative path to \cref{thm:main} is to treat \cref{prop:restrictedface} as the main intermediary lemma. 
\end{rem}

\begin{rem} 
\cref{prop:restrictedface} and \eqref{eq:restrictdiagram} are compatible with the inclusion-reversing nature of the correspondence between $J_S$ and $F_p$. 
In particular, if $J_{S_1} \subseteq J_{S_2}$, then $W_{S_2} \subseteq W_{S_1}$, and there is a commutative diagram analogous to \eqref{eq:restrictdiagram} with a row for each of $S_1, S_2$. 
\end{rem}

\begin{example}
 In \cref{ex:hirz2}, there is a $\Phi$-stratum $S$ with $J_S = \{2,4\}$ so the orthogonal vectors are $\pm e_2$. $S$ is labeled in red in \Cref{fig:p1}. 
 Identifying $W_S = \{ u_2 = 0 \} \subset \RR^2$ with $\RR$ via the $u_1$ coordinate, the zonotope $\tilde{Z}$ corresponds to taking the set of vectors $\tilde{A} = \{ \pm 1 \}$.
 We can see $\tilde{Z}$ as a face of $Z$ in the rightmost image in \Cref{fig:p1}. 
\end{example}

\section{Boundary Bondal-Thomsen elements} \label{sec:toric}

In this section, we prove \cref{thm:maintoric} and make a few related observations.
We continue with the notation from \cref{subsec:toricintro}. 

\begin{proof}[Proof of \cref{thm:maintoric}]
Since the boundary of $Z_X$ lies on the boundary of the effective cone, $\tau$ coincides with the span of the minimal face $F$ of $Z_X$ containing the lattice point corresponding to $S_d$. Thus, the first part of the claim follows immediately from \cref{thm:main} after noting that $\ker \varphi = 0$ since we have assumed the $X$ is semiprojective and hence $\Sigma(1)$ contains a basis of $N_\RR$. 

For the second part, we consider \eqref{eq:restrictdiagram}.
The dual lattice to $W_{S_d} \subseteq M$ is 
\[ N_{S_d} \coloneq N/\mathrm{span}_{\ZZ}(u_\rho \mid \rho \in J_{S_d}) \]
where $u_\rho$ is the primitive generator of $\rho \in \Sigma(1)$. 
Then, we obtain a collection of rays in $N_{S_d}$ as the image of $\Sigma(1)\setminus \{\rho \mid \rho \in J_{S_d} \}$ which determines a secondary fan (or equivalently a toric GIT problem). 
Let $Y$ be any of the toric varieties corresponding to a chamber in the moving cone of that secondary fan.
Then, \cref{prop:restrictedface} identifies the half-open zonotope $Z_Y$ of $Y$ with $F$.
Therefore, the Bondal-Thomsen collection of $Y$, which is in correspondence with lattice points of $Z_Y$, is in bijection with the subset of $\Theta_X$ lying on $F$, which are precisely those coming from strata $S$ with $J_{S_d} \subseteq J_S$ by \cref{thm:main}. 
Now, since the span of $Z_Y$ is $\mathrm{Eff}(Y)$ and the span of $F$ is $\tau$, we have a natural identification of $\mathrm{Eff}(Y)$ and $\tau$. 
\end{proof} 

When Bondal-Thomsen elements lie on the boundary of the effective cone as in \cref{thm:maintoric}, there are homological consequences. 
For example, the following statement follows immediately from \cref{thm:maintoric} and \cite[Proposition 2.25]{ballard2024king}. 

\begin{prop} \label{prop:pushforward}
Suppose that $\tau$ is a face of $\mathrm{Eff}(X)$ and $Y$ is a toric variety with effective cone naturally identified with $\tau$ as in \cref{thm:maintoric}. If $X'$ corresponds to a chamber of $\Sigma_{GKZ}(X)$ adjacent to $\tau$, then
\[ (R\pi_*)\mathcal{O}_{X'}(-d) = 0 \]
for all $d \in -\Theta$ whose image does not lie in $\tau$ and where $\pi$ is the map induced by viewing the fan of $X'$ as a refinement of a generalized fan for $Y$. 
\end{prop}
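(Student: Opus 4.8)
The plan is to deduce the vanishing directly from \cite[Proposition 2.25]{ballard2024king}, using \cref{thm:maintoric} (and \cref{prop:restrictedface}) only to pin down the base $Y$ together with its class group and effective cone. First I would assemble the identifications. Since $X'$ arises from a chamber of $\Sigma_{GKZ}(X)$, it has the same ray set $\Sigma(1)$ as $X$, hence the same map $\varphi$, the same half-open zonotope, the same class group, and the same effective cone; in particular $\Theta_{X'}=\Theta_X=\Theta$ and $\mathrm{Eff}(X')=\mathrm{Eff}(X)$, so the line bundles $\mathcal{O}_{X'}(-d)$ with $d\in-\Theta$ are exactly the Bondal--Thomsen collection of $X'$. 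The hypothesis that the chamber of $X'$ is adjacent to the face $\tau$ of $\mathrm{Eff}(X)$ produces a toric contraction $\pi\colon X'\to Y$ by coarsening the fan of $X'$ to the generalized fan for $Y$ built in the proof of \cref{thm:maintoric}; pulling back divisor classes gives an injection $\pi^*\colon \mathrm{Cl}(Y)\hookrightarrow \mathrm{Cl}(X')$ whose real image is $\mathrm{span}(\tau)$. This is the map $\tilde\iota$ of \eqref{eq:restrictdiagram}, and its compatibility with the identification $\mathrm{Eff}(Y)\cong\tau$ is precisely \cref{prop:restrictedface}.

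Second I would record the elementary convex-geometry fact that, because $\tau$ is a face of the rational polyhedral cone $\mathrm{Eff}(X)$, one has $\mathrm{Eff}(X)\cap\mathrm{span}(\tau)=\tau$: the face $\tau$ is exposed, so $\tau=\mathrm{Eff}(X)\cap H$ for a supporting hyperplane $H\supseteq\mathrm{span}(\tau)$, and intersecting with $\mathrm{span}(\tau)$ gives the claim. Now take $d\in-\Theta$ whose image in $\mathrm{Cl}(X)\otimes\RR$ does not lie in $\tau$. Since $-\Theta$ has image in $\mathrm{Eff}(X)$, that image lies in $\mathrm{Eff}(X)$, hence by the previous sentence it does not lie in $\mathrm{span}(\tau)=\pi^*(\mathrm{Cl}(Y)\otimes\RR)$; therefore $d$ is not of the form $\pi^*d''$ for any $d''\in\mathrm{Cl}(Y)$, i.e., $\mathcal{O}_{X'}(-d)$ is not pulled back from $Y$.

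Finally, \cite[Proposition 2.25]{ballard2024king} computes $R\pi_*\mathcal{O}_{X'}(-d)$ for $\pi$ a toric morphism induced by a refinement and $-d$ in the Bondal--Thomsen collection of $X'$: it vanishes unless $\mathcal{O}_{X'}(-d)$ is pulled back along $\pi$. Combined with the previous paragraph, this yields $(R\pi_*)\mathcal{O}_{X'}(-d)=0$, which is the assertion.

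The main obstacle I anticipate is bookkeeping rather than substance: reconciling the two descriptions of $Y$ --- the chamber of the restricted (moving-cone) secondary fan appearing in \cref{thm:maintoric} versus the target of the contraction of the $\Sigma_{GKZ}(X)$-chamber of $X'$ adjacent to $\tau$ --- and checking that the map $\pi$ obtained by "viewing the fan of $X'$ as a refinement of a generalized fan for $Y$" meets the exact hypotheses of \cite[Proposition 2.25]{ballard2024king}, in particular that "pulled back along $\pi$" corresponds to "the class lies in $\mathrm{span}(\tau)$". One should also confirm that the case where $\pi$ is of fiber type rather than birational (if it can occur when $\tau\subseteq\partial\,\mathrm{Eff}(X)$) is already covered by \cite[Proposition 2.25]{ballard2024king}.
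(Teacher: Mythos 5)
Your argument is essentially the paper's: the text preceding the proposition states only that it follows immediately from \cref{thm:maintoric} together with \cite[Proposition~2.25]{ballard2024king}, and your proof just unpacks exactly that combination --- use \cref{thm:maintoric} and \cref{prop:restrictedface} to identify $Y$, its class group, and $\mathrm{Eff}(Y)\cong\tau\subseteq\mathrm{Eff}(X)$, observe via the face property that ``image not in $\tau$'' forces ``image not in $\mathrm{span}(\tau)=\pi^*(\mathrm{Cl}(Y)\otimes\RR)$'', and then feed this into the vanishing criterion of the cited proposition. Your own caveats at the end are the right ones to check (that the morphism here is necessarily of fiber type when $\tau$ is a proper face, and that the criterion in \cite[Proposition~2.25]{ballard2024king} is correctly paraphrased as ``pulled back along $\pi$''), but they are genuinely bookkeeping rather than gaps in the strategy.
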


Although we have chosen to state \cref{thm:maintoric} and frame our discussion thus far in terms of a chosen toric variety $X$, the careful reader will note that all our constructions depend only on $\Sigma(1)$ and thus are actually invariants of the Cox ring of $X$.
Categorically, that means the natural repository for any homological consequences is the Cox category $D_{Cox}(X)$ which was recently introduced in \cite{ballard2024king}.
For example, with a little more work, one can show that an analogue of \cref{prop:pushforward} holds for a natural functor from $D_{Cox}(X)$ to $D_{Cox}(Y)$ though we do not prove it here, as setting up the Cox categories and the functor would take us much further from the elementary combinatorics which are the focus of this paper.
Instead, we briefly continue our speculation and note that the analogue of \cref{prop:pushforward} on Cox categories would imply
\begin{equation} \label{eq:orthogonal}
    \mathrm{Hom}_{D_{Cox}(X)}(\mathcal{O}_{Cox}(-d), \mathcal{O}_{Cox}(-d')) = 0 
\end{equation}
when the dimension of the minimal face of $Z_X$ corresponding to $d \in -\Theta$ is smaller than that corresponding to $d' \in -\Theta$ and where we refer to \cite[Definition 4.1]{ballard2024king} for the definition of  $\mathcal{O}_{Cox}(-d)$.
For example, there are no morphisms from the Bondal-Thomsen generator corresponding to the red dot in \Cref{fig:p1} to any of the Bondal Thomsen generators corresponding to the blue dots in \Cref{fig:p1} in the Cox category of the Hirzeburch surface of type two.
It should also be possible to verify \eqref{eq:orthogonal} under the dimension condition above directly using \cite[Corollary 4.25]{ballard2024king}.  
To summarize, the combinatorics of \cref{thm:main} demonstrates that the geometry of $Z$ is related to semi-orthogonality in the Cox category.

We end by pointing out a homological problem on toric varieties that seems to require a more subtle combinatorial analysis.
Lower-dimensional $\Phi$-strata in the Bondal stratification are closely related to linear subvarieties in the sense of \cite[Definition 5.8]{hanlon2024resolutions} as both require a subset of $\Sigma(1)$ that spans a subspace of $N$. 
However, the presence of a linear subvariety depends also on the higher dimensional cones.
In particular, \cite[Conjecture 5.11]{hanlon2024resolutions} posits a relationship between linear subvarieties of $X$ and the line bundles whose Frobenius pushforwards generate the derived category.  
Although our results may provide some insight, this problem requires additional understanding of where $X$ sits in its secondary fan and cannot be solved entirely from the combinatorics of $Z$ or a construction that lifts to the Cox category.

\printbibliography

\Addresses

\end{document}